\newcommand{\F}{\mathbb{F}}
\newcommand{\Z}{\mathbb{Z}}
\theoremstyle{plain}
\newtheorem{theorem}{Theorem}[section]
\newtheorem{proposition}[theorem]{Proposition}
\theoremstyle{definition}
\newtheorem{definition}[theorem]{Definition}
\title{A Small Maximal Sidon Set in $\Z_2^n$}
\author{Maximus Redman
\and Lauren Rose
\and Raphael Walker}
\date{}
\begin{document}

\maketitle

\begin{abstract}
    A Sidon set is a subset of an Abelian group with the property that each sum of two distinct elements is distinct. We construct a small maximal Sidon set of size $O((n \cdot 2^n)^{\sfrac{1}{3}})$ in the group $\Z_2^n$, generalizing a result of Ruzsa concerning maximal Sidon sets in the integers.
\end{abstract}

\section{Introduction}

\noindent Sidon sets were first studied by Simon Sidon in the 1930s \cite{sidon}, and then by Paul Erd\H{o}s in the 1940s \cite{erdos}. At this time, they were sometimes referred to as Sidon sequences, given that they were assumed to be subsets of the integers. However, the notion of Sidon set generalizes quite naturally to other Abelian groups, and may be generalized to arbitrary groups in a number of ways. We limit our discussion here to Sidon sets contained in Abelian groups, with the following definition.

\begin{definition}
    Let $G$ be an Abelian group and $S \subseteq G$. We say $S$ is \textbf{Sidon} if whenever $a+b = x+y$ and $a, b, x, y$ are in $S$, we have $\{a, b\} = \{x, y\}$.
\end{definition}

By such a definition, if every element of $G$ has order 2, then every Sidon set in $G$ consists of a single element. Thus in the case $G = \Z_2^n$, we further require that $a \neq b$ and that $x \neq y$, i.e. that the sum of every pair of \emph{distinct} elements be distinct.

The case $G = \Z_2^n$ is the subject of this paper. For these groups, the initial question of interest is the largest possible size of a Sidon set, which is known to be $\Theta(2^{\sfrac{n}{2}})$ (for example, \cite{lindstrom, bch, taitwon2021}). However, another question of interest, which has been much less studied, is the \emph{minimal} size of a maximal Sidon set, defined below.

\begin{definition}
    We say a Sidon set $S \subseteq G$ is \textbf{maximal} if there exists no Sidon set $S' \subseteq G$ with $S \subset S'$.
\end{definition}

The union of a maximal Sidon set with any additional element is not a Sidon set. That is, for any $x$ not in $S$, the equation $x+a = b+c$ is solvable for $a, b, c \in S$. The main result of this paper is the construction of maximal Sidon set in $\Z_2^n$ of size $O((n \, 2^n)^{\sfrac{1}{3}})$, in \cref{thm:small-maximal}.

\section{Large Sidon Sets in \texorpdfstring{$\Z_2^n$}{\textit{(Z/2Z)\^{}n}}}\label{sec:thick}

\noindent The smallest known construction of a maximal Sidon set in the integers is due to Ruzsa \cite{ruzsa}. Ruzsa's method for constructing a small maximal Sidon set relies on the existence of a sufficiently dense Sidon set in a smaller space than the one of ultimate interest: for Sidon sets in the integers, Singer's theorem \cite{singer} on the existence of a Sidon set of size $p+1$ in $\Z_{1 + p + p^2}$ sufficed to provide this smaller set. 

A construction in the case of $\Z_2^n$ comes from the field of coding theory, in particular the Bose–Chaudhuri–Hocquenghem (BCH) codes \cite{bch}, which are a type of error-correction system making use of polynomials in finite fields. When constructed over $\F_2$, BCH codes of length $n$ and distance 5 are in fact Sidon sets. The Sidon sets resulting from such constructions are suitable for the proof of \cref{thm:small-maximal}, our generalization of Ruzsa's construction.

\begin{proposition}\label{prop:tait-won}
    Let $S_n \subset \F_{2^{n/2}} \times \F_{2^{n/2}}$ be defined by
    \[
        \{(x, x^3) \mid x \in \F_{2^{n/2}}\}.
    \]
    Then $S_{n}$ is a Sidon set for all even $n \geq 1$.
\end{proposition}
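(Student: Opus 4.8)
The plan is to unwind the Sidon condition coordinate by coordinate and reduce it to a statement about the elementary symmetric functions of pairs of field elements. Identifying $\Z_2^n$ with $\F_{2^{n/2}} \times \F_{2^{n/2}}$ as an additive group, suppose $(a,a^3) + (b,b^3) = (c,c^3) + (d,d^3)$ with $a \neq b$ and $c \neq d$ (the distinctness required in the $\Z_2^n$ setting). Since $x \mapsto (x,x^3)$ is injective, establishing the Sidon property amounts to showing $\{a,b\} = \{c,d\}$. Reading off the two coordinates, the hypothesis is exactly the pair of equations $a+b = c+d$ and $a^3 + b^3 = c^3 + d^3$, where all arithmetic takes place in a field of characteristic $2$.

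First I would record the characteristic-$2$ factorization $a^3 + b^3 = (a+b)^3 + (a+b)\,ab$, which follows from $a^3 + b^3 = (a+b)(a^2+ab+b^2)$ together with $(a+b)^2 = a^2 + b^2$. Writing $s = a+b = c+d$ for the common first-coordinate sum, the second equation becomes $s^3 + s\,ab = s^3 + s\,cd$, i.e.\ $s\,(ab + cd) = 0$. The point is that $a \neq b$ forces $s = a+b \neq 0$ in characteristic $2$, so I may cancel $s$ to conclude $ab = cd$.

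At this stage $\{a,b\}$ and $\{c,d\}$ share both their sum $s$ and their product $ab = cd$, so each is precisely the root set of the single quadratic $t^2 + s\,t + ab$ over $\F_{2^{n/2}}$. Since $a \neq b$ this quadratic has two distinct roots, namely $a$ and $b$, which must then coincide with $c$ and $d$; hence $\{a,b\} = \{c,d\}$, as desired. I expect the only real content to lie in the characteristic-$2$ identity of the second step — once the second-coordinate equation collapses to $s\,(ab+cd) = 0$, the rest is forced. The one point demanding care is the cancellation of $s$, which is exactly where the hypothesis $a \neq b$ (the distinctness built into the $\Z_2^n$ definition of Sidon) is used; without it the argument breaks, consistent with the fact that order-$2$ groups admit no nontrivial Sidon sets under the naive definition.
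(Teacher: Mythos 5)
Your proof is correct, but it takes a genuinely different route from the paper: the paper's entire proof is a one-line citation, invoking Lemma~2 of the BCH-codes reference \cite{bch} with $m = n/2$, $l = 4$, $t = 2$, whereas you carry out the field computation directly. Your reduction is sound at every step: the identity $a^3 + b^3 = (a+b)^3 + (a+b)\,ab$ holds in characteristic $2$, the cancellation of $s = a+b$ is legitimately justified by the hypothesis $a \neq b$, and the final step correctly identifies $\{a,b\}$ and $\{c,d\}$ as the root sets of the same separable quadratic $t^2 + st + ab$. What your approach buys is self-containment and transparency --- it makes visible exactly where characteristic $2$ and the distinctness hypothesis $a \neq b$ enter, the latter being precisely the modification to the Sidon definition that the paper flags for $\Z_2^n$. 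What the paper's citation buys is the connection to coding theory (these sets are parity-check supports of distance-$5$ BCH codes), which motivates the construction but hides the elementary content you have exposed. Either proof would be acceptable; yours is arguably more informative to a reader who does not have the coding-theory reference at hand.
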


\begin{proof}
    Apply Lemma~2 in \cite[74]{bch} with $m=n/2$, $l=4$, and $t=2$.
\end{proof}

As $\F_{2^{n/2}} \times \F_{2^{n/2}}$ is additively isomorphic to $\Z_2^n$, the image of $S_n$ under any such isomorphism is a Sidon set in $\Z_2^n$. By abuse of notation, we will henceforth consider $S_n \subset \Z_2^n$ to be some such image.

If $S$ is a Sidon set in $G$, for some points $x$ of $G \setminus S$, $S \cup \{x\}$ is a Sidon set, and for other points it is not: that is, we have $x + a = b + c$, for some $a, b, c \in S$. Recall that for a maximal Sidon set,  $S \cup \{x\}$ is never a Sidon set. Yet the number of ``collisions'' --- that is, the number of solutions to $x+a=b+c$ --- varies depending on the choice of $x$. We use the following definition to measure the number of such solutions.

\begin{definition}\label{def:cover}
    Let $S \subseteq \Z^n_2$ be a Sidon set. We say a point $x \in \Z_2^n \setminus S$ is \textbf{covered $k$ times by $S$} if there exist $k$ distinct unordered solutions $\{a,b,c\}$ to 
    \[
        a + b + c = x,
    \]
    for $a, b, c \in S$.
\end{definition}

It follows immediately that if a point $x$ is covered $k$ times by $S$, then the $k$ triples $\{a, b, c\}$ satisfying $a + b + c = x$ are disjoint by the Sidon property of $S$. The maximal Sidon sets are exactly those that cover every point at least once, and maximal Sidon sets that cover each point many times are relatively ``denser'' than those which cover each point fewer times in the same space. Thus to search for large Sidon sets, we try to maximize how many times each point is covered, while to search for small Sidon sets, we want to minimize the same quantity.

We show that the BCH Code construction of $S_{2n} \subset \Z_2^{2n}$ (above) covers every point of $\Z_2^{2n} \setminus S_{2n}$ relatively many times. Note that notationally, we say $f(n) = \Omega(g(n))$ if there exists $c > 0$ and $n_0$ such that for all $n > n_0$, $f(n) \geq c g(n)$. Compare to the more common big-O notation: $f(n) = O(g(n))$ if $f(n) \leq c g(n)$.

\begin{theorem}\label{thm:k-cover}
    The set $S_{2n}$ given by \cref{prop:tait-won} covers every element of ${\Z_2^{2n} \setminus S_{2n}}$ at least $\Omega(2^n)$ times.
\end{theorem}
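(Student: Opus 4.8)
The plan is to count, for a fixed target point $(u,v) \in \F_{2^n} \times \F_{2^n}$ with $v \neq u^3$ (so that $(u,v) \notin S_{2n}$), the number of unordered triples $\{a,b,c\}$ of distinct elements of $\F_{2^n}$ satisfying the two coordinate equations
\[
  a + b + c = u, \qquad a^3 + b^3 + c^3 = v,
\]
and to show this number is $\Omega(2^n)$. Working in characteristic $2$, I would first pass to the elementary symmetric functions $e_1, e_2, e_3$ of $a, b, c$. Newton's identities give $e_1 = u$ and $p_3 = e_1^3 + e_1 e_2 + e_3$, so the second equation becomes $e_3 = v + u^3 + u e_2$; thus $e_3$ is an affine function of the single free parameter $e_2$. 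Each admissible triple is exactly a choice of $e_2$ for which the monic cubic $t^3 + u t^2 + e_2 t + e_3$ splits into three distinct roots in $\F_{2^n}$, so it suffices to count such $e_2$.

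Next I would depress the cubic. The substitution $t = s + u$ (legitimate and root-count-preserving in any characteristic) transforms it into $s^3 + p s + q$ with $p = u^2 + e_2$ and, after the $u e_2$ terms cancel in characteristic $2$, the constant term $q = v + u^3$, which is \emph{independent of $e_2$} and nonzero precisely because $v \neq u^3$. As $e_2$ ranges over $\F_{2^n}$ so does $p$, so the count reduces to the number of $p$ for which $s^3 + p s + q$ has three distinct roots $r_1, r_2, r_3 \in \F_{2^n}$. Such roots satisfy $r_1 + r_2 + r_3 = 0$ and $r_1 r_2 r_3 = q \neq 0$, and conversely any unordered triple of distinct elements with these two properties determines a unique such $p$. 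Since $q \neq 0$ forces all $r_i$ nonzero and pairwise distinct automatically, the coverings of $(u,v)$ are in bijection with unordered triples $\{r_1,r_2,r_3\}$ of distinct elements with zero sum and product $q$.

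Finally I would count these triples via the plane curve they trace out. Writing $r_3 = r_1 + r_2$, each ordered pair $(r_1, r_2)$ with $r_1 r_2 (r_1 + r_2) = q$ gives such a triple, and each triple arises from exactly $6$ ordered pairs, so the number of triples is one sixth the number of affine $\F_{2^n}$-points of the cubic $C \colon X^2 Y + X Y^2 = q$. The homogenization $X^2 Y + X Y^2 + q Z^3$ has partial derivatives $Y^2,\ X^2,\ q Z^2$, which vanish simultaneously only at the origin when $q \neq 0$; hence $C$ is a smooth (thus geometrically irreducible) plane cubic of genus $1$. The Hasse–Weil bound then gives $2^n + 1 - a_n$ projective points with $|a_n| \leq 2 \cdot 2^{n/2}$. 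Subtracting the three points at infinity $[0:1:0],\ [1:0:0],\ [1:1:0]$ and noting that no affine point is degenerate (any $r_i = 0$ or coincidence would force $q = 0$), the number of good ordered pairs is $2^n - 2 - a_n$, so the covering count is $(2^n - 2 - a_n)/6 = \Omega(2^n)$.

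The routine part is the symmetric-function bookkeeping and the depression of the cubic; the one genuinely non-elementary input, and the main obstacle, is the point count on $C$: one must verify smoothness (hence genus $1$ and geometric irreducibility) in characteristic $2$ and invoke the Weil bound, while carefully excluding the three points at infinity and confirming that the hypothesis $q \neq 0$ rules out every degenerate affine solution. Keeping track of the factor of $6$ and of the bijection between coverings and curve points is where errors are most likely to creep in.
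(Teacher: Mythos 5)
Your proof is correct, and while it shares the paper's overall skeleton---reduce the covering count for $(u,v)$ to counting $\F_{2^n}$-points on a plane cubic, apply Hasse--Weil, discard the three points at infinity, and divide by $6$---it differs in exactly the step that costs the paper most of its effort. The paper substitutes $c = x+a+b$ directly and works with the cubic $y + a^3 + b^3 + (x+a+b)^3 = 0$, whose absolute irreducibility it must verify by hand by solving a ten-equation system for the coefficients of a hypothetical factorization, and whose genus it then bounds by $1$ via a somewhat loosely invoked singular-point formula. Your symmetric-function reduction together with the translation $r_i = a_i + u$ (equivalently, depressing the cubic) puts the \emph{same} curve --- the two are related by the linear change of coordinates $(a,b)\mapsto(a+u,b+u)$ --- into the normal form $XY(X+Y) = q$ with $q = v + u^3 \neq 0$, where the Jacobian criterion (partials $Y^2$, $X^2$, $qZ^2$) shows smoothness in one line; smoothness then gives geometric irreducibility and genus exactly $1$ for free, so the hypotheses of Hasse--Weil are immediate. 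The remaining bookkeeping --- distinctness of the roots forced by $q\neq 0$, the three points at infinity $[1:0:0]$, $[0:1:0]$, $[1:1:0]$, and the factor of $6$ between ordered pairs and unordered triples --- matches the paper's and is handled correctly, yielding the same bound $(2^n - 2 - 2\sqrt{2^n})/6 = \Omega(2^n)$. Your route is arguably the cleaner write-up of this theorem.
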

\begin{proof}

    As in \cref{prop:tait-won}, let
    \[
        S = \{(x, x^3) \mid x \in \F_{2^n}\}.
    \]
    Let $(x, y) \in \F_{2^n}^2 \setminus S$. That is, let $x, y \in \F_{2^n}$, where $y \neq x^3$. To determine how many times $S$ covers $(x, y)$, we count the number of solutions $(a,b,c)$ over $\F_{2^n}$ to
    \begin{align}
        a + b + c &= x  \label{eqn:cover1}\\
        a^3 + b^3 + c^3 &= y \label{eqn:cover2}.
    \end{align}
    
    If \cref{eqn:cover1} holds, then $c = x+a+b$, and it suffices to solve the equation
    \begin{equation*}
        0 = y + a^3 + b^3 + (x + a + b)^3.
    \end{equation*}
    We can homogenize this to get a projective curve $C$ of degree 3, defined by
    \[
        F(a, b, p) = a^2b + b^2a + a^2px + b^2px + ap^2x^2 + bp^2x^2 + p^3x^3 + p^3y = 0.
    \]
    The polynomial $F$ is absolutely irreducible: that is, irreducible over $\overline{\F}_{2^n}[a, b, p]$, where $\overline{\F}_{2^n}$ is the algebraic closure of $\F_{2^n}$. To see this, specialize to a polynomial in $\overline{\F}_{2^n}[a, b]$ by setting $p = 1$. Supposing that $F(a, b, 1)$ is the product of two non-constant polynomials, we have without loss of generality
    \[
        F(a, b, 1) = (s_1 a^2 + s_2 b^2 + s_3 ab + s_4 a + s_5 b + s_6)(t_1 a + t_2 b + t_3).
    \]
    Expanding the product and setting the coefficient of each resulting term equal to the coefficient of the corresponding term in $F(a, b, 1)$ gives the following system of equations, which we will solve by hand:
    \begin{align}
        s_1 t_1 &= 0 \label{eqn:a3}\\ 
        s_2 t_2 &= 0 \label{eqn:b3}\\ 
        s_3 t_1 + s_1 t_2 &= 1 \label{eqn:a2b}\\ 
        s_2 t_1 + s_3 t_2 &= 1 \label{eqn:ab2}\\ 
        s_4 t_1 + s_1 t_3 &= x \label{eqn:a2}\\ 
        s_5 t_2 + s_2 t_3 &= x \label{eqn:b2}\\ 
        s_5 t_1 + s_4 t_2 + s_3 t_3 &= 0 \label{eqn:ab}\\ 
        s_6 t_1 + s_4 t_3 &= x^2 \label{eqn:a}\\ 
        s_6 t_2 + s_5 t_3 &= x^2 \label{eqn:b}\\ 
        s_6 t_3 &= x^3 + y. \label{eqn:const}
    \end{align}

    By \eqref{eqn:a3}, the possibilities for $(s_1, t_1)$ are $(1, 0)$, $(0, 1)$, or $(0, 0)$ without loss of generality. \eqref{eqn:a2b} rules out the possibility that $s_1 = t_1 = 0$, so it remains to check the two cases $s_1 = 0, t_1 = 1$ and $s_1 = 1, t_1 = 0$. 

    Suppose $s_1 = 1, t_1 = 0$. By \eqref{eqn:a2b} we have $t_2 = 1$ and hence $s_2 = 0$. By \eqref{eqn:ab2} we have $s_3 = 1$; by \eqref{eqn:a2}, $t_3 = x$; by \eqref{eqn:b2}, $s_5 = x$; by \eqref{eqn:ab}, $s_4 = x$; by \eqref{eqn:b}, $s_6 = 0$; and hence by \eqref{eqn:const} that $x^3 = y$, which contradicts the assumption that $(x, y) \not\in S$.

    On the other hand, suppose $s_1 = 0, t_1 = 1$. Then we have by \eqref{eqn:a2b} that $s_3 = 1$ and by \eqref{eqn:ab2} that $s_2 + t_2 = 1$. Combining this with \eqref{eqn:b3}, we have that either $s_2 = 1$ and $t_2 = 0$ or $s_2 = 0$ and $t_2 = 1$. Since the polynomial $F(a, b, 1)$ is symmetric in $a$ and $b$, (i.e, $F(a, b, 1) = F(b, a, 1)$), the case $s_2 = 1$ and $t_2 = 0$ reduces to the case above, which we have seen to be inconsistent. So without loss of generality we have $s_2 = 0, t_2 = 1$. Then by \eqref{eqn:a2} we have $s_4 = x$; by \eqref{eqn:b2}, $s_5 = x$; by \eqref{eqn:ab}, $t_3 = 0$; and hence by \eqref{eqn:const} that $x^3 = y$ which is as before a contradiction.

    Thus the system of equations for the factorization of $F(a, b, 1)$ into two non-constant polynomials is inconsistent when $x^3 \neq y$, and so $F(a, b, 1)$ is absolutely irreducible. As the specialization in $\overline{\F}_{2^n}[a, b]$ is irreducible, so is the original polynomial in $\overline{\F}_{2^n}[a, b, p]$.
    
    Since $F$ is absolutely irreducible, by the Hasse-Weil theorem (see, for instance, \cite[6]{hurt}),
    \[
        2^n + 1 - 2g\sqrt{2^n} \leq \#C \leq 2^n + 1 + 2g\sqrt{2^n},
    \]
    where $g$ is the geometric genus of $C$ and $\#C$ is the number of projective points on $C$ (i.e., the number of solutions up to scalar multiplication of $(a, b, p)$).

    By the Riemann-Hurwitz formula (see, for instance, \cite[37]{silverman}), the genus of $C$ can be computed from the degree $d$ and the number of singular points $s$ of $C$. In particular,
    \[
        g = \frac{(d-1)(d-2)}{2} - s = 1 - s \leq 1,
    \]
    and hence
    \[
        2^n + 1 - 2\sqrt{2^n}\leq \#C \leq 2^n + 1 + 2\sqrt{2^n}.
    \]
    
    The solutions of interest are the affine solutions, given by
    \[
        F(a, b, 1) = 0.
    \]
    
    Let $A$ be the number of affine solutions as above, and $P$ the number of projective solutions, given by $F(a, b, 0) = 0$. We have $\#C = A + P$.
    
    $P$ is the number of solutions to 
    \[
        F(a, b, 0) = a^2 b + b^2 a = a b (a + b) = 0,
    \]
    which is satisfied by the pairs $(0, 1), (1, 0)$, and $(1, 1)$, and so $P = 3$. Then we have
    \[
        A = \#C - P \geq 2^n - 2\sqrt{2^n} - 2.
    \]
    
    Each of the $A$ affine points $(a, b, 1)$ on $C$ corresponds to an ordered triple of distinct elements $(a, b, a+b+x)$ such that $(a, a^3) + (b, b^3) + (a+b+x, (a+b+x)^3) = (x, y)$. From our assumption that $(x, y) \not\in S$ (i.e., $y \neq x^3$), it follows that for each pair, $a$, $b$, and $a+b+x$ are distinct. Thus the point $(x, y)$ is covered $A/6 \geq \frac{2^n - 2\sqrt{2^n} - 2}{6} = \Omega(2^n)$ times by $S$.
\end{proof}

For $n = 3, 5, 7, 9$, we have observed by direct computation that the Sidon set given by the method above covers each point \emph{exactly} $\frac{2^n - 2}{6}$ times, and we conjecture that the pattern continues for larger odd $n$. Nonetheless, the fact that each point is covered $\Omega(2^n)$ times is sufficiently strong for the requirements of the main theorem of this paper, in which we construct a small maximal Sidon set in $\Z_2^n$ by ``projecting'' this Sidon set $S$ from a subspace.

\section{An Analogue of Ruzsa's Construction}\label{sec:thin}

In this section we discuss maximal Sidon sets and their sizes. While rather tight bounds on the largest possible size of a Sidon set are known for many groups (the size of the largest Sidon set in $\Z_q$ is around $\sqrt{q}$; see, for example, \cite{obryant}), very little is known about the minimal sizes of maximal Sidon sets. 

In the case of $G = \Z_2^n$, any maximal Sidon set $S$ satisfies
\[
    \binom{|S|}{3} + |S| \geq 2^n,
\]
as $S$ is maximal if and only if every point of $G \setminus S$ is covered at least once by $S$, and hence $|S| = \Omega(2^{\sfrac{n}{3}})$. We suspect that this bound is not sharp in general.
On the other hand, there has not been until now an \emph{upper} bound on the size of the smallest maximal Sidon set in $\Z_2^n$ except for the upper bound on the size of \emph{any} Sidon set: $|S| = O(2^{\sfrac{n}{2}})$.
In the case of the integers, Ruzsa \cite{ruzsa} showed that there exists a maximal Sidon set in $[1, N]$ with size $O((N \log N)^{\sfrac{1}{3}})$.

The main theorem of this paper is a similar bound on the size of maximal Sidon sets in $\Z_2^n$.

\begin{theorem}\label{thm:small-maximal}
    There exists a maximal Sidon set $S \subseteq \Z_2^n$ such that
    \[
        |S| = O\left((n \cdot 2^n)^{\sfrac{1}{3}}\right).
    \]
\end{theorem}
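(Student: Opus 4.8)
The plan is to follow Ruzsa's strategy, adapted to $\Z_2^n$: take the dense Sidon set $S_{2m} \subseteq \Z_2^{2m}$ from \cref{prop:tait-won} — which by \cref{thm:k-cover} covers every point of $\Z_2^{2m} \setminus S_{2m}$ at least $\Omega(2^m)$ times — and ``lift'' it into the larger ambient group $\Z_2^n$ by spreading its elements across a direct-sum decomposition, so that collisions in the small coordinate block force coverage in the large one. Concretely, I would write $\Z_2^n \cong \Z_2^{2m} \oplus \Z_2^{n-2m}$ for a parameter $2m \le n$ to be optimized, and build $S$ from a small collection of ``shifted copies'' of $S_{2m}$: a set of the form $\{(s, \phi(s, j)) : s \in S_{2m}, \, j \in J\}$ for some index set $J$ and a carefully chosen map $\phi$ into $\Z_2^{n-2m}$. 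The idea is that for the candidate set to remain Sidon, the tags $\phi(s,j)$ must separate every potential collision; but because $S_{2m}$ already covers each point of the small block many times, any arbitrary $x \in \Z_2^n$ projects to a point in $\Z_2^{2m}$ that admits many triples from $S_{2m}$, and we then need only ensure that at least one of these triples lifts to a genuine covering triple in $\Z_2^n$.

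The key steps, in order, would be: first, fix the block size $2m$ by setting $2^{2m}$ comparable to $(n \cdot 2^n)^{2/3}$, equivalently choosing $2m \approx \tfrac{2}{3}(n + \log_2 n)$, so that the target size $|S| = O((n \cdot 2^n)^{1/3})$ is consistent with $|S| \approx |S_{2m}| \cdot |J| = 2^m \cdot |J|$. Second, set up the lifting map $\phi$ and compute, for a uniformly random choice of the tags $\phi(s,j) \in \Z_2^{n-2m}$, the probability that the resulting set fails to be Sidon or fails to cover a fixed point $x$. Third, run a union bound (or a deletion/alteration argument) over all $2^n$ points $x \in \Z_2^n$ and over all potential Sidon-violating quadruples, showing that for the chosen parameters a tagging exists that is simultaneously Sidon and maximal; the $\Omega(2^m)$ coverage from \cref{thm:k-cover} is exactly what makes the covering failure probability for a single point small enough (roughly $(1 - 2^{-(n-2m)})^{\Omega(2^m)}$) to survive the union bound over $2^n$ points. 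Fourth, verify that the surviving set has size $O(2^m \cdot |J|)$ matching the target, possibly after discarding the $O(2^{2m})$-sized overhead.

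I expect the main obstacle to be the tension in the probabilistic union bound between the two competing requirements. On one hand, maximality demands that \emph{every} one of the $2^n$ points be covered, which forces the number of independent covering triples available per point — supplied by the $\Omega(2^m)$ guarantee — to beat $n \cdot 2^{n-2m}$ (the log factor coming from the union bound over $2^n$ points). On the other hand, the Sidon condition restricts how freely we may assign the tags $\phi(s,j)$, since collisions in the large block must be avoided; making $J$ too large destroys the Sidon property, while making it too small starves the coverage. Balancing these, the admissible range of $m$ will be narrow, and the calculation that the single-point covering-failure probability is $o(2^{-n})$ — which is where the full strength of the $\Omega(2^m)$ bound in \cref{thm:k-cover} is consumed — is the delicate heart of the argument. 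A secondary difficulty is handling the dependence between the covering events of distinct points $x$ and ensuring the triples counted for coverage are genuinely distinct after lifting; I would manage this either by the disjointness of covering triples noted after \cref{def:cover} or by a Lovász-Local-Lemma-style or explicit construction if the plain union bound proves too lossy.
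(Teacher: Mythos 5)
Your high-level strategy is the paper's: project to a small block of size roughly $(n\cdot 2^n)^{2/3}$, use the dense covering Sidon set supplied by \cref{thm:k-cover} there, lift it by uniformly random tags in the complementary block, and beat a union bound over the $2^n$ points using the $\Omega(2^m)$ \emph{disjoint} (hence independent) covering triples; your parameter choice $2m\approx\frac{2}{3}(n+\log_2 n)$ is also the paper's. The first gap is in the ``shifted copies'' formulation $\{(s,\phi(s,j)) : s\in S_{2m},\ j\in J\}$: taking $|J|\geq 2$ puts two elements over the same $s$, whose sum has first coordinate $0$, so keeping the lifted set Sidon imposes an extra Sidon-type condition on the tags $\phi$ --- this is precisely the ``tension'' you anticipate, and it is self-inflicted. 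The paper takes one uniformly random coset representative $b_i$ per element $a_i$ of the small Sidon set $A$ (in effect $|J|=1$); because $A$ is Sidon in the quotient $\Z_2^n/Q$, \emph{every} choice of representatives is automatically Sidon in $\Z_2^n$, so there is no constraint on the tags whatsoever and the delicate balancing act you describe disappears. Since the target size forces $|J|=O(1)$ anyway, the multiple copies buy nothing.

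The second, more serious gap is your claim that ``any arbitrary $x\in\Z_2^n$ projects to a point in $\Z_2^{2m}$ that admits many triples from $S_{2m}$.'' This fails for every $x$ whose projection lies \emph{in} $S_{2m}$: a Sidon set never covers its own elements (in characteristic 2, $a+b+c=d$ with $a,b,c,d$ in $S$ and $\{a,b,c\}$ distinct gives $a+b=c+d$ with $c \neq d$, violating the Sidon property), so \cref{thm:k-cover} says nothing about these $|S_{2m}|\cdot 2^{n-2m}\approx 2^{2n/3}/n^{1/3}$ points --- far too many to ignore, and ``discarding overhead'' goes in the wrong direction, since reaching maximality requires \emph{adding} elements over these fibers, not removing them. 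The paper closes this hole by extending the successful random set $B_0$ to a maximal Sidon set $S=B_0\cup X$ and observing that each added $s_i\in X$ must lie over some $a_{t_i}\in A$ and determines an element $q_i=s_i+b_{t_i}$ of $Q$ which the Sidon property of $S$ forces to be distinct for distinct $i$; hence $|X|\leq|Q|=2^{n-2m}=O(2^{\sfrac{n}{3}})$, which is absorbed into the target bound. Without this extension-plus-pigeonhole step your set is not maximal and the size bound is incomplete; your remaining worry about dependence between the covering events of distinct $x$ is a non-issue, as the union bound needs no independence there.
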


In this section, we adopt the technique from \cite{ruzsa} in order to construct a small maximal Sidon set in $\Z_2^n$, using a method that generalizes easily to arbitrary Abelian groups, provided a sufficiently dense Sidon set can be found in a quotient of the desired group. In the future, we hope to use this method to construct small maximal Sidon sets in groups of the form $\Z_p^n$, for $p > 2$.

\begin{proof}[Proof of \cref{thm:small-maximal}]
    Let $T > 0$ be a fixed constant, such that for each $t$, the Sidon set $S_{2t} \subset \Z_2^{2t}$ given by \cref{thm:k-cover} covers every point of $\Z_2^{2t} \setminus S_{2t}$ at least $\frac{2^t}{T}$ times (i.e., $\Omega(2^t)$).

Let $m$ be the least even integer satisfying
\[
    m > \frac{2}{3} \log_2(T \ln(2) \, n \, 2^n),
\]
and let $Q < \Z_2^n$ be a subgroup isomorphic to $\Z_2^{n-m}$. Observe that the quotient group $\Z_2^n/Q$ is isomorphic to $\Z_2^m$, which is the additive group of $\F_2^m$.
For an element $x$ of $\Z_2^n$, let $\overline x$ be the coset $x + Q$.

By \cref{thm:k-cover}, there exists a Sidon set $A \subset \Z_2^n/Q$ (i.e., $A = S_m$) such that $|A| = 2^{\sfrac{m}{2}}$ and $A$ covers every $p \in \Z_2^n/Q \setminus A$ at minimum $\frac{2^{\sfrac{m}{2}}}{T}$ times.

For each $a_i$ in $A$, pick a random representative $b_i$ of the coset $a_i$, and let $B = \{b_i \mid 1 \leq i \leq 2^{\sfrac{m}{2}}\}$. Choose each representative $b_i$ from a uniform distribution on $a_i$, independently of the random choices for each other representative, such that each of the $(2^{n-m})^\frac{m}{2}$ possible choices for $B$ has the same probability. Regardless of which representatives are chosen, $B$ is a Sidon set in $\Z_2^n$.

Any such set $B$ can be extended to a maximal Sidon set $S$. We can add an element $x$ to $B$ and still have a Sidon set if any only if $x$ is not covered by $B$: that is, if there is no solution to
\begin{equation*}
    x = a + b + c
\end{equation*}
for $a, b, c \in B$.

We show that it is possible to choose $B$ such that every $x$ satisfying $\overline x \not\in A$ is covered by $B$. That is, if $A$ covers $\overline x$, then $B$ covers $x$.
Since we chose each $b_i$ randomly, we determine the probability that such an element (i.e., with $\overline x \not\in A$) is covered by $B$.

Since $A$ covers every element of $\Z_2^n/Q \setminus A$ at least $2^{m/2}/T$ times, let $(a_{u_j}, a_{v_j}, a_{w_j})$ for $1 \leq j \leq J$ (where $J \geq 2^{m/2}/T$) be a sequence of disjoint triples of elements of $A$, such that for each $j$,
\[
    a_{u_j} + a_{v_j} + a_{w_j} = \overline x,
\]
and hence $b_{u_j} + b_{v_j} + b_{w_j} \in \overline x$.
Since each $b_i$ was chosen with a uniform distribution from the coset $a_i$, for each $j$ we have 
\[
    P(b_{u_j} + b_{v_j} + b_{w_j} = x) = \frac{\text{\# of triples of elements in $\overline x$ whose sum is $x$}}{\text{\# of triples of elements in $\overline x$}} = \frac{|Q|^2}{|Q|^3} = 2^{m-n}
\]

Each pair of triples of indices $(u_j, v_j, w_j)$ and $(u_k, v_k, w_k)$ is disjoint if $j \neq k$, and so if we apply the computed probability for the $J$ independent events, we have
\[
    P(b_{u_j} + b_{v_j} + b_{w_j} \neq x, \text{ for all } 1 \leq j \leq J) = (1 - 2^{m-n})^J \leq e^{-J\,2^{m-n}} \leq e^{-\frac{2^{(\sfrac{3}{2})m - n}}{T}},
\]
and due to the initial choice of $m > \frac{2}{3} \log_2(T \ln(2) \, n \, 2^n)$,
\[
    P(b_{u_j} + b_{v_j} + b_{w_j} \neq x, \text{ for all } 1 \leq j \leq J) \leq e^{-\frac{2^{(\sfrac{3}{2})m - n}}{T}} < 2^{-n}.
\]

As the probability of the union of events is at most the sum of the probabilities of each event, we have
\[
    P(\text{$\exists x$ such that $\overline x \not\in A$ and $x$ is not covered by $B$}) \leq (2^n - |Q||A|) \, 2^{-n} < 1,
\]
and hence
\[
    P(\text{$B$ covers each $x$ satisfying $\overline x \not\in A$}) > 0.
\]
Since this probability is positive, there exists a choice of $B$ which covers each $x$ satisfying $\overline x \not\in A$. Let $B_0$ be such a choice.
The set $B_0$ is not necessarily maximal, but we may bound the number of elements required to extend it to a maximal Sidon set. So let $S = B_0 \cup X$ be a maximal Sidon set.
For each element $s_i$ of $X$, $\overline {s_i} \in A$, so let $a_{t_i} = \overline{s_i}$. Then we have $b_{t_i} = s_i + q_i$ for some $q_i \in Q$. But then $q_i = s_i + b_{t_i}$, and since $S$ is Sidon, $q_i \neq q_j$ for $i \neq j$. Thus by the pigeonhole principle, there can be at most $|Q|$ elements in $X$, we have
\[
    |X| \leq |Q| = 2^{n-m} = O(2^{\sfrac{n}{3}}).
\]  
Finally, as $|S| = |B_0| + |X|$ and $|B_0| = |A|$, we achieve 
\[
    |S| \leq |A| + |Q| \leq 2^{m/2} + 2^{n-m} \leq O((n \cdot 2^n)^{\sfrac{1}{3}}) + O(2^{\sfrac{n}{3}}) = O((n \cdot 2^n)^{\sfrac{1}{3}}).
\]
\end{proof}

This result provides an upper bound on the smallest maximal Sidon set, and thus if $S$ is the smallest maximal Sidon set in $\Z_2^n$, we have
\[
    \Omega((2^n)^{\sfrac{1}{3}}) \leq |S| \leq O((n \cdot 2^n)^{\sfrac{1}{3}}).
\]
This pair of bounds parallels the best-known bounds on the minimal size of maximal Sidon sets in the integers $[1, N]$. 

Probabilistic estimates by P. Bennett and T. Bohman \cite{bennett2015note} on the size of greedily-constructed maximal sets in regular hypergraphs predict that in the case of $\Z_2^n$, the right-hand side of this interval is the best-possible upper bound, as a randomly constructed maximal Sidon set in $\Z_2^n$ has size $\Omega((n \cdot 2^n)^{\sfrac{1}{3}})$ with high probability. Thus the minimal size of maximal Sidon sets may be easier to compute here than in the integers, and hence we anticipate that this question will be resolved definitively in the future.

\printbibliography

\end{document}